\newtheorem{thm}{Theorem}[section]
\newtheorem{prop}{Proposition}[section]
\newtheorem{lemma}{Lemma}[section]
\newtheorem{definition}{Definition}[section]
\newtheorem{remark}{Remark}[section]
\newtheorem{example}{Example}[section]
\newcommand{\A}{{\mathcal A}}
\newcommand{\X}{{\mathcal X}}
\newcommand{\C}{{\mathbb C}}
\sloppy \setcounter{tocdepth}{1}
\begin{document}

\title{Multinets in $\mathbb P^2$}

%
% Date of Last Update
%
\footnote{Updated: October 24, 2013}

\author{J. Bartz}
\address{Department of Mathematics \\ Francis Marion University \\
Florence \\ SC 29506 USA} \email{ jbartz@fmarion.edu}

\author{S. Yuzvinsky}
\address{Department of Mathematics \\ University of Oregon \\ Eugene \\ OR 94703 USA}
\email { yuz@uoregon.edu }

\dedicatory{}
\thanks{}

\keywords{nets, mutinets, hyperplane arrangements} 

% ABSTRACT
\begin{abstract}
Multinets are certain configurations of lines and points with multiplicities in the complex projective plane $\mathbb{P}^2$.
They are used in the studies of resonance and characteristic varieties of complex hyperplane arrangement complements and cohomology of Milnor fibers.  From combinatorics viewpoint  they can be considered as generalizations of Latin squares. Very few examples
of multinets with non-trivial multiplicities are known. In this paper, we present new examples of multinets. These are obtained by using an analogue of  nets in $\mathbb{P}^3$
and intersecting them by planes. 

\end{abstract}

\maketitle

\date{\today}

% INTRODUCTION
\section{Introduction}
\label{S:1}

Multinets are certain configurations of lines and points with multiplicities in the complex projective plane $\mathbb{P}^2$. More exactly they are multi-arrangements of projective lines partitioned in three blocks with some extra properties (see section 2). They appeared in \cite{FY, LY} in the study of resonance and characteristic varieties of the complement of a complex hyperplane arrangement.  More recently, multinets have been used to study the cohomology of Milnor fibers such as in \cite{DS}.
Although multinets can be defined purely combinatorially using an incidence relation, very few examples
of multinets with non-trivial multiplicities are known. In the paper, we recall some definitions, describe a new method to obtain multinets and give quite a few new examples. For that we consider an analogue of nets in $\mathbb{P}^3$
and intersect them by planes.  

The paper is organized as follows. In section 2, we recall basic definitions and properties of multinets. In section 3, we give the general idea of constructing multinets. Section 4 contains the main part of the paper. We systematically go over
different cases of constructed multinets. The main parameters being numbers of lines and points of various multiplicities. All cases are classified except multinets with all lines having multiplicity 1 and points having multiplicities 1 and 2. For that case we have only examples and a uniform upper bound on the number of points of multiplicity 2. In section 5,
we discuss briefly the combinatorics inside blocks. The conclusion is that this combinatorics is defined by multinet structure, i.e., the combinatorics between blocks. Finally some open questions and conjectures are collected in section 6.

% INTRODUCTION
\section{Preliminaries}
\label{S:2}

\subsection{Pencils of curves and multinets in $\mathbb{P}^2$}

There are several equivalent ways to define multinets in $\mathbb{P}^2$. We introduce them  here using pencils of plane curves.
A {\it pencil of plane curves} is a line in the projective space 
of homogeneous polynomials from $\mathbb{C}[x_1,x_2,x_3]$ of some fixed degree $d$.  Any two distinct curves of the same degree generate a pencil, and conversely a pencil is 
determined by any two of its curves $C_1, C_2$. An arbitrary curve $C$ in the 
pencil (called a {\it fiber}) is $C=aC_1 + bC_2,\ [a:b]\in \mathbb P^1.$  Every two fibers in a pencil intersect in the same set of points $\X=C_1\cap C_2,$ called the {\it base} of the pencil. If fibers do not have a common component
called a ({\it fixed component}), then the base is a finite set of points.

A curve of the form  $\prod_{i=1}^q \alpha_i^{m_i},$ where $\alpha_i$ 
are distinct linear forms and $m_i\in \mathbb{Z}_{>0}$
for $1\leq i\leq q,$ is called {\it completely reducible}. Such a curve is called {\it reduced} if $m_i=1$ for each $i$.  We are interested in connected pencils of plane curves without fixed components and at least three completely
reducible fibers. By connectivity here we mean the nonexistence of a reduced fiber whose components intersect only at $\X$. For brevity we say that such a pencil is {\it of Ceva type}.

\begin{definition} The union of all completely reducible fibers (with a fixed partition into fibers, also called {\it blocks}) 
of a Ceva pencil of degree $d$ is called a  
($k,d$)-multinet where $k$ is the number of the blocks.  The base $\X$ of the pencil is determined by the multinet structure and called the base of the multinet.

 If the intersection of two fibers is transversal, i.e., $|\X|=d^2$ and hence all fibers are reduced then the multinet is called a net. If $|\X|<d^2$, i.e., some elements of $\X$ are provided with multiplicities
$m(P)>1$ but all completely reducible fibers are reduced then we call the multinet proper and light. Otherwise we call it heavy. 
 \end{definition}

From the viewpoint of projective geometry, a $(k,d)$-multinet is a multi-arrangement  $\mathcal{A}$ of lines in $\mathbb P^2$ provided with multiplicities $m(\ell)\in \mathbb{Z}_{>0}$ ($\ell\in \mathcal{A}$) and
partitioned into $k$ blocks $\mathcal{A}_1,\ldots, \mathcal{A}_k$ ($k\geq 3$) subject to the following two condition. 

(i) Let $\X$ be the set of the intersections of lines from different blocks. For each $P\in\X$, the number $$m(P)=\sum_{\ell\in \A_i, P\in \ell} m(\ell)$$ is independent on $i$. This number is called the {\it multiplicity} of $P$.

(ii) For every two lines $\ell$ and $\ell'$ from the same block there exists a sequence of lines from that block $\ell=\ell_0,\ell_1,\ldots,\ell_r=\ell'$ such that $\ell_{i-1}\cap\ell_i\not\in\X$ for $1\leq i\leq r$.

Thus, multinets can be defined purely combinatorially using an incidence relation. Note that  the multiplicity $m(\ell)$  for each $\ell\in \mathcal{A}$ equals the multiplicity of its corresponding linear factor in the completely reducible fibers of the Ceva pencil.

\subsection{Properties of multinets and examples that have been known}

There are several important properties of multinets. 

\begin{prop} 
Let $\mathcal{A}$ be a $(k,d)$-multinet. Then:\\

(1) $\sum_{\ell\in \mathcal{A}_i}m(\ell)=d$, independent of $i$;

(2) $\sum_{\ell\in \mathcal{A}}m(\ell)=dk$;

(3) $\sum_{P\in\X}m(P)^2=d^2$ (B\'ezout's theorem);

(4) $\sum_{P\in\X\cap\ell}m(P)=d$ for every $\ell\in \A$;

(5) There are no multinets with $k\geq 5$; 

(6) All multinets with $k=4$ are nets.

\end{prop}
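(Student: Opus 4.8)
The first four parts are elementary consequences of the pencil description, so I would dispatch them before turning to the genuinely hard statements (5) and (6). For (1) and (2): a completely reducible fiber of block $\A_i$ is the curve $\prod_{\ell\in\A_i}\ell^{m(\ell)}$, which is a member of a pencil of degree $d$; hence its degree $\sum_{\ell\in\A_i}m(\ell)$ equals $d$. This is (1), and summing over the $k$ blocks gives (2) at once.

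For (3) and (4) the plan is to apply B\'ezout's theorem to two completely reducible fibers, and to a single line against a fiber, and then to evaluate the local intersection numbers. First I would record the two structural facts that make the local computation trivial: since the pencil has no fixed component, two fibers $C_i,C_j$ from different blocks share no component, and condition (i) forces $C_i\cap C_j=\X$; moreover, since every base point lies on each fiber and each fiber is a union of lines, every $P\in\X$ lies on at least one line of every block. For (3), at $P\in\X$ the local intersection number of $C_i$ and $C_j$ factors as the product of the block-$i$ and block-$j$ multiplicities at $P$, because distinct lines meet transversally; this product is $m(P)\cdot m(P)=m(P)^2$, and B\'ezout $(d\cdot d=d^2)$ yields (3). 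For (4), intersecting a single line $\ell\in\A_i$ with a fiber $C_j$ ($j\ne i$) gives $\ell\cap C_j=\X\cap\ell$, the local number at $P$ is again the block-$j$ multiplicity $m(P)$, and B\'ezout $(1\cdot d=d)$ gives (4).

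The main obstacle is (5) and (6). Here the plan is topological: resolve the Ceva pencil to a morphism $\bar f\colon Y\to\PP^1$, where $Y$ is the blow-up of $\PP^2$ along $\X$, and compare Euler characteristics. Writing $F$ for a generic (smooth) fiber and using additivity over $\PP^1$, one has $\chi(Y)=2\chi(F)+\sum_{s}(\chi(F_s)-\chi(F))$, where the sum runs over the special fibers and every completely reducible fiber contributes a strictly positive excess. Carrying this out, most transparently in the net case where each block has $d$ reduced lines and $|\X|=d^2$, bounds $k$ from above.

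The difficulty is that the naive version of this estimate only yields $k\le 5$; squeezing it down to $k\le 4$, and then proving that a $4$-multinet must in fact be a net, requires a finer analysis of how much Euler characteristic a completely reducible fiber can carry, together with control of the within-block intersection structure and of the non-reduced contributions. I expect ruling out the borderline $k=5$ case and forcing reducedness when $k=4$ to be the crux, and it is exactly the point where I would invoke the detailed study of completely reducible fibers in pencils (in the spirit of Pereira--Yuzvinsky) rather than a bare Euler-characteristic count.
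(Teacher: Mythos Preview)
The paper does not actually prove this proposition: it simply states that (1)--(4) are easy and proved in \cite{FY}, while (5) and (6) are harder and proved in \cite{St, Yu2}. Your treatment of (1)--(4) is correct and is essentially the B\'ezout argument one finds in \cite{FY}; in that sense you have supplied more than the paper does. For (5) and (6) your plan---resolve the pencil by blowing up $\X$, compare Euler characteristics fiberwise, and then push the borderline cases using a finer analysis of completely reducible fibers---is exactly the strategy of \cite{Yu2} (building on \cite{St}), and you correctly flag that the naive count only reaches $k\le 5$ and that the real work is in eliminating $k=5$ and forcing reducedness at $k=4$. Since the paper itself defers these to the literature, your sketch is already at least as informative as what the paper provides; the only thing to note is that your outline for (5)--(6) is a plan rather than a proof, which matches the paper's own stance of citing rather than reproducing those arguments.
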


The first four numerical equalities are easy and proved in \cite{FY}.  The last two properties are harder to establish and are proved in \cite{St, Yu2}. We now recall several examples of proper multinets that have been known since \cite{FY}. 

\begin{example}
A  $(k,1)$-net consists of $k$ lines intersecting all at one point with each block consisting of one point. This case corresponds to a so-called local resonance component. It is considered to be trivial and we will
often tacitly assume that $d>1$.
\end{example}

\begin{example} For each $n\geq1$, a $(3,2n)$-multinet is given by the pencil generated by polynomials
$x^n(y^n-z^n)$
and $y^n(x^n-z^n)$ with the third completely reducible fiber being $z^n(x^n-y^n)$. These are the projectivizations of the reflection arrangements for the full monomial groups $G(n,1,3)$ (see \cite{OT}). For $n=1$, it gives the only (up to projective isomorphism) $(3,2)$-net of Coxeter type $A_3$;
for $n=2$, it is the $(3,4)$-multinet of Coxeter type $B_3$. These multinets are heavy when $n>1$.
\end{example}

\begin{example}
The cubics $xyz$ and $x^3+y^3+z^3$ generate a Ceva pencil  with 4 completely 
reducible fibers. They give  the $(4,3)$-net known as the Hasse configuration.
It is the only known multinet with 4 blocks. A long-standing conjecture is that the Hasse configuration is the unique 4-net. 
\end{example}

\subsection{Constructions of nets}

From combinatorial viewpoint, $(k,d)$-nets are realizations of  $k-2$ pairwise orthogonal Latin squares of size $d$  (after identifying all blocks). 
 If $k=3$, the Latin square gives a multiplication table of a quasi-group. Thus one can view such a net as a representation of a Latin square or a quasi-group. (See \cite{Ynet}.)

If this quasi-group is a group, the representation can be reconstructed
using the complex torus $(\mathbb C^*)^2$. The list of groups that represent a net have
been recently completed by Korchmaros, Nagy and Pace in \cite{KNP1,KNP}
confirming a conjecture by Yuzvinsky in \cite{Yu3}. They also discovered new ways to construct the respective nets. Examples of nets representing quasi-groups which are not groups were constructed first by Stipins in \cite{St1}.

\section{Construction of Multinets}

\subsection{Multinets in higher dimensions}

It is possible to generalize the notion of multinet to ${\mathbb P^r}$ ($r>2$) using pencils of homogeneous polynomials of $r+1$ variables instead of 3. It is known that no multinet exists for $r\geq 5$ and  every multinet in ${\mathbb P^3}$ or ${\mathbb P^4}$ would be a net with 3 blocks (see \cite{PY}).

The only known nets in ${\mathbb P^r}$ for $r>2$ are the $(3,2n)$-nets ($n=1,2,\ldots$) in ${\mathbb P^3}$ given for every $n$ by the defining polynomial
$$ Q_n=[(x_0^n-x_1^n)(x_2^n-x_3^n)][(x_0^n-x_2^n)(x_1^n-x_3^n)][x_0^n-x_3^n)
(x_1^n-x_2^n)]$$
where the brackets determine the blocks.

This is the collection of all (projectivizations of) reflection hyperplanes of the finite complex reflection group known as the monomial group $G(n,n,4)$ (see \cite{OT}). For $n=2$ it is the Coxeter group of type $D_4$.

Each block of $Q_n$ is partitioned in two {\it half-blocks}
of degree $n$ each. Notice that all the planes of a half-block
intersect at one line, called the {\it base of the half-block}. For instance the base of the leftmost half-block is given by the system $x_0=0,x_1=0$.

%\begin{figure}
%{\includegraphics[width=7cm]{braid.pdf}}
%\caption{[A (3,2)-net]}
%\end{figure}

%\begin{figure}
%[A (3,4)-multinet]{\includegraphics[width=7cm]{b3.pdf}}
%\caption{}
%\end{figure}

%\begin{figure}
%{\includegraphics[width=7cm]{refinement.pdf}}
%\caption{[light multinet]}
%\end{figure}

%\begin{figure}
%\{\includegraphics[width=7cm]{steinerspec.pdf}}
%\caption{[Steiner configuration]}
%\end{figure}

\subsection{Construction of multinets} 

Unlike for nets, there have been no known systematic ways to construct proper multinets.
Here we suggest a way that has produced a variety of new examples.

Intersect  $Q_n$ with a plane $H$ that does not belong to $Q_n$. The resulting multi-arrangement in $H$ is denoted by $\A^H$  and referred to as the {\it arrangement induced by $Q_n$}. The pencil in $\mathbb P^3$ 
corresponding to $Q_n$ induces a pencil in ${\mathbb P^2}$ with 3 completely reducible fibers. It may happen that  the pencil has a fixed component
In this case we cancel the fixed components obtaining a smaller arrangement $\A^H_0$ with a multinet structure. Abusing the notation slightly we will call $\A^H$
 (if there is no fixed component) or $\A^H_0$, provided with the partitions
 into fibers of the induced pencil, the {\it induced multinet}.
 
 In the rest of the paper, the following agreement is applied. We use a homogeneous coordinate system $[x_0:x_1:x_2:x_3]$ in $\mathbb P^3$. If needed we can change the coordinates using symmetries of $Q_n$. In particular, in examples below we can always assume that the plane $H$ does not contain the point $[1:0:0:0]$ (by permuting coordinates if needed). Then intersecting with $H$ amounts to substituting $x_0$ by a linear combination of other coordinates from the equation of $H$. We also can change the coordinates multiplying them by any $n$th root of unity. We will also normalize 
 an equation for $H$ or homogeneous coordinates of a point dividing it by a non-zero number.

\section{Examples of multinets induced by $Q_n$}

\subsection{General position}
 
If $H$  does not contain any elements of the intersection lattice of $Q_n$ then $\A^H $ is a $(3,2n)$-net
 realizing the dihedral group of order $2n$. Moreover, $\A^H $ contains the $(3,n)$-net realizing the cyclic group $\mathbb{Z}_n$ as a subarrangement.
 For instance for $n=3$, $\A^H$ realizes the dihedral group of order 6 and contains the net realizing $\mathbb{Z}_3$ as a subarrangement. 
 
 \subsection{Heavy induced multinets (lines of multiplicity $n$)}
\label{heavy-n}
 
 If $H$ contains a line $\ell$ from the lattice of $Q_n$ then one of the following three situations happens. If $\ell$ is the base of a half-block then it becomes of multiplicity $n$ in $\A^H$.  If $\ell$ is the intersection of two planes from different half-blocks of a block
 then $\A^H$ gets a line of multiplicity  2. Finally if these two planes come from
 different blocks $\A^H$ gets a fixed component and cancellation is required.
 
 If $H$ does not contain a line from the lattice of $Q_n$ then $\A^H$ is light.
 
If $H$ contains the base of precisely one half-block then $\A^H$ is a proper heavy multinet with only one line of multiplicity $n$.
For instance, if $H$ is given by $x_0=cx_1$ ($c\not=0$ or is not a root of unity of degree $n$) then the blocks of $\A^H$ are determined by the polynomials: 
$$x_1^n(x_2^n-x_3^n), (c^nx_1^n-x_2^n)(x_2^n-x_3^n), (c^nx_1^n-x_3^n)(x_1^n-x_2^n)
.$$

If $H$ contains the bases of 2 half-blocks then
these half-blocks are from different blocks (since the bases of the half-blocks from the same block do not intersect in $\mathbb P^3$) and $H$ contains also the base
of a half-block from the third block. Such $H$ is a coordinate plane, for example, 
$x_0=0$. Then every block of $\A^H$ contains exactly one line of multiplicity $n$ and the underlying arrangement is 
 the reflection arrangements for the full monomial groups $G(n,1,3)$ (see above). For instance, if $H$ is given by $x_0=0$ then $\A^H$ 
has blocks: $$x_1^n(x_2^n-x_3^n), x_2^n(x_1^n-x_3^n), x_3^n(x_1^n-x_2^n).$$

\subsection{Heavy induced multinets (lines of multiplicity 2)}
\label{heavy-2}
$\A^H$ has a line of multiplicity 2 if and only if  $H$ contains the line of intersection
of precisely two planes of $Q_n$ from different half-blocks of the same block. If $H$ is generic with that condition then there are no fixed components and $\A^H$ has precisely one  line of multiplicity 2. Also $H$ could contain several such lines from different blocks which produces
up to three lines of multiplicity 2  if $n$ is even, and up to two such lines
if $n$ is odd. 
For instance, if $H$ is given by $a(x_0-x_1)-(x_2-x_3)=0$ with $a$ not 0 or a root of unity of degree $n$ then $\A^H$ has the factor $(x_2-x_3)^2$ in one block and all other factors have multiplicity 1. If $a=1$ and $n$ is odd,  $\A^H$ also contains the factor $(x_1-x_3)^2$ in another block. If $a=1$ and $n$ is even, $\A^H$ contains additionally the factor $(x_1+x_2)^2$ in the third block. The multiplicities of all other factors are equal to 1 in each of these situations.
A similar effect is produced by $a$ equal to a root of unity of degree $n$.

Finally it is easy to see that $\A^H$ cannot have both: a line of multiplicity $n$ and a line of multiplicity 2. Indeed by subsection \ref{heavy-n} the former forces 
$H$ to have equation of the form $Ax_i-Bx_j=0$ for some distinct $i$ and $j$.
If now $H$ contains the intersection of two planes from a block of $Q_n$ then this block must coincide with the block containing $x_i^n-x_j^n$. Furthermore
$H$ must coincide with  a plane of this block (given by $x_i-\zeta x_j=0$
where $\zeta$ is a root of unity) which is forbidden.

\subsection{Light induced multinets (points of multiplicity $n$)}
\label{light-n}
In the rest of this section, by a `point' and `line' we mean respectively a point or a line
of $\mathbb P^3$ from the intersection lattice of  $Q_n$.  
If besides any of those lies in $H$ then they have multiplicity coming from
multinet $\A^H$ or $\A^H_0$.

Recall that a multinet is light if it is proper (i.e., not a net) and all of its lines have multiplicity 1. In order to produce a light multinet with a point of multiplicity $n$ the plane $H$ must contain the point of intersection of two bases of half-blocks (they must be from different blocks). 

We can make this more concrete. Every one of six bases of half-bloks can be given by the equations $x_i=x_j=0$ where $\{i,j\}\subset\{1,\ldots,6\}$. Also two non-disjoint bases intersect at one of the points having one coordinate 1 while others 0.  Thus the induced multinet is light with precisely one point of multiplicity $n$
(and others of multiplicity 1) if and only if 
$H$ contains precisely one of these points and is generic otherwise. For instance, if $H$ is given by $Ax_0+Bx_1+Cx_2=0$ with otherwise generic coefficients, then
the induced multinet is light with the only one multiple point $[0:0:0:1]$ of multiplicity
$n$ (in $\A^H$). 

Light $\A^H$ can have more than one point of multiplicity $n$. For instance, $H$
given by $Ax_0+Bx_1=0$ has both $[0:0:0:1]$ and $[0:0:1:0]$. It cannot though
have three such points. Indeed if $AB=0$ then $\A^H$ is heavy (see subsection \ref{heavy-n}).

\subsection{Light induced multinets (points of multiplicity 2)}
When we discuss points of multiplicity 2 of $\A^H$ we always assume that $n>3$ 
in order not to confuse them with points of multiplicity $n$ and (for the presence of fixed components) $n-1$.

Since a point $P$ of multiplicity 2 has exactly 2 lines from each block intersecting at it the plane $H$ must contain the point of 
intersection of six planes, two from each block of $Q_n$. Moreover planes in  every one of these pairs must be from different
half-blocks since otherwise $H$ would contain the base of a block and the induced multinet would be heavy. Conversely if $H$ contains the intersection of such planes then 
$\A^H$ has a point of multiplicity 2.
If $H$ is generic otherwise then $\A^H$ is light and contains precisely one point of multiplicity 2.

For instance, if $H$ is given by $Ax_0+Bx_1+Cx_2+Dx_3=0$ with $A+B+C+D=0$
(whence passing through $P=[1:1:1:1]$) and generic otherwise then  $P$ has multiplicity 2 in $\A^H$ while other lines and points have multiplicity 1.

Let us notice that for a point $P'$ of intersection of only four planes, 2 from one block and 2 from another, there are 2 planes from the third block passing through $P'$.

For the future use we can characterize more explicitly points of those intersections. 
Let 4 planes be given by 
$$x_0-\zeta^a x_1=0,\ x_2-\zeta^b x_3=0,\  x_0-\zeta^c x_2,\ x_1-\zeta^d x_3$$
where $\zeta$ is  primitive root of 1 and its exponents are arbitrary from a cyclic group
$C_n$ (in additive notation). Then for the intersection to exist the equality $a+d=b+c$ is needed
and then the intersection is 
$$[\zeta^{a+d}:\zeta^d:\zeta^b:1]$$. 

Conversely, let $P'\in H$ where $P'=[\zeta^a:\zeta^b:\zeta^c:1]$ with $\zeta,a,b,c$ are as above. Clearly $P'$ lies in the four planes:
$$x_0-\zeta^ax_3=0,\ x_1-\zeta^bx_3=0,\ x_1-\zeta^{b-c}x_2, \ x_0-\zeta^{a-c}x_2$$
that proves the converse.

In particular this proves the following Lemma.

\begin{lemma}
\label{roots}
Any point $P$ of $\mathbb P^3$  has multiplicity 2 in $\A^H$ for every allowable 
plane $H$ passing through it if and only if it  has homogeneous coordinates that are roots of unity of degree $n$.
\end{lemma}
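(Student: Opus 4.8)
The plan is to translate the geometric statement into a count of planes of $Q_n$ through $P$ and then to read off an arithmetic condition on the coordinates of $P$. As observed in the paragraphs preceding the statement, a point $P$ carries multiplicity $2$ in the light multinet $\A^H$ exactly when two planes of $Q_n$ from each of the three blocks pass through $P$, the two planes of each block lying in different half-blocks (two planes from one half-block would force $H$ to contain the base of that block and make $\A^H$ heavy). Since the number of planes of $Q_n$ through $P$ depends only on $P$ and not on $H$, I would first record that ``$P$ has multiplicity $2$ in $\A^H$ for every allowable $H$ through $P$'' is equivalent to this intrinsic plane-count condition on $P$.

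For the ``if'' direction I would simply spell out the computation sketched just above the statement. Writing $P=[\zeta^a:\zeta^b:\zeta^c:1]$ with $\zeta$ a primitive $n$th root of unity, a direct substitution shows that in each block exactly one plane from each half-block passes through $P$: from the first block these are $x_0-\zeta^{a-b}x_1$ and $x_2-\zeta^c x_3$; from the second, $x_0-\zeta^{a-c}x_2$ and $x_1-\zeta^b x_3$; from the third, $x_0-\zeta^a x_3$ and $x_1-\zeta^{b-c}x_2$. Thus precisely six planes, two per block and one per half-block, contain $P$, which yields $m(P)=2$.

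For the ``only if'' direction I would read each half-block condition as an arithmetic constraint. A plane $x_i-\zeta^k x_j=0$ contains $P=[p_0:p_1:p_2:p_3]$ iff $p_i=\zeta^k p_j$; when $p_j\neq0$ this has a unique solution $k$ precisely when $p_i/p_j$ is an $n$th root of unity, whereas if $p_j=0$ the half-block contributes either $0$ planes (when $p_i\neq0$) or all $n$ planes (when $p_i=0$). Neither of the latter is compatible with getting exactly one plane from each half-block, so I would first deduce that no coordinate of $P$ vanishes. Normalizing $p_3=1$, the half-blocks pairing $x_0,x_1,x_2$ with $x_3$ (one in each block) then force $p_0,p_1,p_2$ each to be an $n$th root of unity; the remaining three half-block conditions hold automatically since ratios of roots of unity are roots of unity.

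The one genuinely delicate point, which I would treat most carefully, is the equivalence asserted in the first paragraph between the geometric multiplicity of $P$ in $\A^H$ and the intrinsic plane-count. One must check that the two planes of a given block through $P$ contribute $2$ to $m(P)$ for \emph{every} allowable $H$: when $H$ meets them in two distinct lines of multiplicity $1$ this is immediate, and when $H$ happens to contain their line of intersection the block instead contributes a single line of multiplicity $2$ (the situation of Subsection~\ref{heavy-2}), so $m(P)=2$ in either case. Dispatching the vanishing-coordinate subcases and this non-generic-$H$ issue uniformly is where the argument needs attention; the extraction of the root-of-unity condition is then routine.
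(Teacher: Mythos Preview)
Your proposal is correct and follows essentially the same approach as the paper, which derives the lemma from the explicit computation in the paragraphs immediately preceding it: showing that the intersection of the relevant planes forces root-of-unity coordinates, and conversely that any $P=[\zeta^a:\zeta^b:\zeta^c:1]$ lies on exactly one plane from each half-block. Your write-up is in fact more thorough than the paper's---you list all six planes explicitly, handle the vanishing-coordinate cases in the ``only if'' direction, and flag the non-generic-$H$ case where two planes from a block meet $H$ in a single line of multiplicity~$2$; the one small imprecision is your parenthetical (``two planes from one half-block would force $H$ to contain the base''), where the correct reason is rather that $P$ would then lie on the base line and hence on all $n$ planes of that half-block, giving multiplicity $\geq n$ rather than~$2$.
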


\subsection{Light induced multinets (several points of multiplicity 2)}
A plane $H$ can have several points described in the previous subsection whence 
$\A^H$ can have several points of multiplicity 2. A partial classification of induced light multinets with double points  for $n\leq 6$ 
is given in \cite{B}. 

First we consider light induced  multinets without points of multiplicity $n$.
The current maximal number of points of multiplicity 2 known for examples of light multinets is 8.

\begin{example} Take $n=8$ and fix a primitive 8th root of unity $\zeta$. Let $H$ be given by
 
$$x_0-(\zeta+1)x_1-\zeta^3x_2+(\zeta^3+\zeta)x_3=0.$$ 

Then $\A^H$  is light,  has no fixed components and has 8 points of multiplicity 2 (with all other points having multiplicity 1). These 8 points are as follows:

$$\begin{array}{ll}
[1:1:1:1] \hspace{1cm} \phantom{}& [\xi^5:\xi^2:\xi^3:1] \\
\phantom{}[\xi^2:\xi:1:1] & [\xi^5:\xi^3:\xi^5:1] \\
\phantom{}[\xi^2:\xi^2:\xi^6:1] & [\xi^7:1:\xi:1] \\
\phantom{}[\xi^4:\xi^3:\xi^6:1] & [\xi^7:\xi:\xi^3:1]. \\
\end{array}$$\\

\noindent Each of these points lies on $H$ and on exactly six hyperplanes of $Q_8$ (one from each half-block). For instance, $[\xi^2:\xi:1:1]$ lies on the six hyperplanes $$\begin{array}{lll}
x_0-\xi x_1 \hspace{0.5cm} \phantom{}& x_0-\xi^2 x_2 \hspace{0.5cm} \phantom{}& x_0-\xi^2 x_3 \\
x_2-x_3 & x_1-\xi x_3 & x_1-\xi x_2. \end{array}$$\\

\end{example}

On the other hand, the following surprising result holds.

\begin{thm}
\label{no n}
Let $n>3$ and $\A^H$ a light induced multinet without points of multiplicity $n$. Then the number of points of multiplicity 2 in it 
 is less than $2^{96}$ (independently of $n$).
\end{thm}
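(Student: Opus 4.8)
The plan is to turn the counting of multiplicity-$2$ points into a Diophantine question about roots of unity and then invoke a uniform bound on the number of non-degenerate solutions of a linear equation in roots of unity.

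First I would record, using the explicit computation preceding Lemma \ref{roots}, that every point $P$ of multiplicity $2$ in $\A^H$ has all of its homogeneous coordinates equal to $n$-th roots of unity. Indeed $P$ lies on exactly one plane from each of the six half-blocks; in particular it lies on the three planes $x_2-\zeta^{i}x_3$, $x_1-\zeta^{j}x_3$, $x_0-\zeta^{k}x_3$ coming from the half-blocks that involve $x_3$, so if the $x_3$-coordinate of $P$ vanishes then so do the others, which is impossible. Hence that coordinate is nonzero and, after normalizing $x_3=1$, we have $P=[\zeta^a:\zeta^b:\zeta^c:1]$ for a fixed primitive $n$-th root of unity $\zeta$. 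Writing $H$ as $\alpha_0x_0+\alpha_1x_1+\alpha_2x_2+\alpha_3x_3=0$, Lemma \ref{roots} then shows that the multiplicity-$2$ points are in bijection with the solutions $(z_0,z_1,z_2)$, $z_i\in\mmu$, of
\begin{equation*}
\alpha_0z_0+\alpha_1z_1+\alpha_2z_2+\alpha_3=0 \tag{$\ast$}
\end{equation*}

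Next I would show that the two hypotheses force $(\ast)$ to have only non-degenerate solutions. The absence of points of multiplicity $n$ means exactly that $H$ misses the four coordinate vertices, which by subsection \ref{light-n} are the only possible multiplicity-$n$ points; that is, all $\alpha_i\neq0$, so $(\ast)$ is a genuine four-term relation with no vanishing single term. A proper vanishing subsum of the four terms $\alpha_0z_0,\alpha_1z_1,\alpha_2z_2,\alpha_3$ can therefore involve neither one term (nonzero) nor three terms (its complement would be a single nonzero term), so it must be a $2{+}2$ split. But the three partitions of $\{x_0,x_1,x_2,x_3\}$ into two pairs are precisely the three blocks of $Q_n$, and a vanishing $2{+}2$ subsum of $(\ast)$ says exactly that $H$ contains the intersection line of two planes from different half-blocks of one block; by subsection \ref{heavy-2} this would make $\A^H$ heavy, contrary to hypothesis. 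Thus every root-of-unity solution of $(\ast)$ is non-degenerate; equivalently, the affine hyperplane $(\ast)$ contains no positive-dimensional coset of a subtorus.

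It then remains to bound, uniformly in $n$, the number of non-degenerate solutions of a four-term linear equation in roots of unity. This is the setting of the quantitative theory of linear equations with unknowns in a multiplicative group of finite rank (Evertse--Schlickewei--Schmidt and its refinements): here the group is the torsion group $\mmu^3\subset(\C^*)^3$, whose rank is $0$, so the resulting bound depends only on the number of terms and not on $n$. The genuine obstacle is the extraction of the clean exponent: the off-the-shelf Evertse--Schlickewei--Schmidt estimate is enormously larger than $2^{96}$, so reaching $96$ requires either the refined, torsion-specific bounds for vanishing sums of roots of unity or an argument tailored to $(\ast)$ (for instance, subtracting two solutions to produce shorter relations and inducting on the number of terms). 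By contrast, the reduction to $(\ast)$ and the identification of the light-and-not-heavy hypothesis with non-degeneracy are elementary, and are the part I would write out in full.
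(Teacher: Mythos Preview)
Your reduction is exactly the paper's: normalize a coordinate, use Lemma~\ref{roots} to put the multiplicity-$2$ points in bijection with root-of-unity solutions of a three-variable affine equation, and use the two hypotheses (no multiplicity-$n$ point $\Rightarrow$ no vanishing coefficient; light $\Rightarrow$ no $2{+}2$ vanishing subsum) to force every solution to be non-degenerate. The paper then simply quotes Schlickewei's bound for equations $\sum_{i=1}^k A_ix_i=1$ in roots of unity, namely $2^{4(k+1)!}$, with $k=3$, which is where the exponent $96=4\cdot 4!$ comes from; so your hesitation about Evertse--Schlickewei--Schmidt is unnecessary --- the ``refined, torsion-specific'' result you allude to is precisely the cited theorem, and no further tailoring is needed.
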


\begin{proof}
We fix $H$ and without any loss assume that it is given by $Ax_0+Bx_1+Cx_2-x_3=0$.
If $\A^H$ has a point of multiplicity 2 then by Lemma \ref{roots}
\begin{equation}
\label{eq:two}
A\zeta^a+B\zeta^b+C\zeta^c=1
\end{equation}
for some primitive $n$-th root of unity $\zeta$ and $a,b,c\in C_n$. 

Now we prove that relation (\ref{eq:two}) of roots of unity is non-degenerate meaning that no proper partial sum of the lefthand side is 0. If for instance $A=0$ then $H$ contains the point $[1:0:0:0]$ that has multiplicity $n$ in $\A^H$  which contradicts a condition of the theorem.

Moreover suppose  $A\zeta^a+B\zeta^b=0$, i.e., $[A:B]=[\zeta^b:-\zeta^a]$
and $C=\zeta^{-c}$. Then the equation for $H$ becomes
$$D\zeta^bx_0-D\zeta^ax_1+\zeta^{-c}x_2-x_3=D\zeta^b(x_0-\zeta^{a-b}x_1)+
\zeta^{-c}(x_2-\zeta^cx_3)=0.$$
This form of the equation shows that $H$ contains the line of intersection
of 2 planes (from one block): $x_0-\zeta^{b-a}x_1=0$ and $x_2-\zeta^cx_3=0.$
The intersection of these planes with $H$ gives in $\A^H$ a line of multiplicity 2 whence $\A^H$ is heavy.

We finish the proof applying the result from \cite{roots} (for $k=3$) which says that the number of non-degenerate solutions in roots of unity of a given equation 
$\sum_{i=1}^kA_ix_i=1$ with complex coefficients is bounded from above by 
$2^{4(k+1)!}$.

\end{proof}

\subsection{Fixed components}

First we rephrase the existence of fixed components in terms of multiplicity of points.
\begin{lemma}
\label{fixed}
Assume that $\A^H$ does not have lines of multiplicity greater than 1. Then
$\A^H$ has both a point of multiplicity $n$ and a point of multiplicity 2 if and only if
$\A^H$ has a fixed component. Besides if $\A^H$ has points of multiplicity 2 it cannot have more than one point of multiplicity $n$.
\end{lemma}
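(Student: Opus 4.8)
The plan is to translate each of the three phenomena---a point of multiplicity $n$, a point of multiplicity $2$, and a fixed component---into an incidence condition on $H$ relative to the intersection lattice of $Q_n$, and then connect these conditions geometrically. Recall from subsection \ref{light-n} that $\A^H$ has a point of multiplicity $n$ exactly when $H$ contains one of the four coordinate points $[1:0:0:0],\dots,[0:0:0:1]$ (the pairwise intersections of the six half-block bases $\{x_i=x_j=0\}$), and that, by Lemma \ref{roots}, $\A^H$ has a point of multiplicity $2$ exactly when $H$ contains a point all of whose coordinates are $n$-th roots of unity. Finally, since every line of $\A^H$ has multiplicity $1$, a fixed component is a reduced line $m\subset H$ dividing all three restricted blocks, and as in subsection \ref{heavy-n} this happens iff $m=P_1\cap P_2$ for two planes $P_1,P_2$ of $Q_n$ from \emph{different} blocks. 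The decisive elementary observation I would record first is that such an intersection line always passes through a coordinate point: two planes from different blocks have index-pairs meeting in exactly one index (the three blocks being the three pairings of $\{0,1,2,3\}$), so their pairs cover a three-element set and the coordinate point indexed by the missing coordinate annihilates both equations. For example, for $P_1\colon x_0=\zeta^ax_1$ and $P_2\colon x_0=\zeta^cx_2$ one has $m=\{[1:\zeta^{-a}:\zeta^{-c}:t]\}$, which passes through $[0:0:0:1]$.

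For the forward implication, suppose $\A^H$ has a fixed component $m=P_1\cap P_2\subset H$. By the observation above $m$ contains a coordinate point, which then lies in $H$ and yields a point of multiplicity $n$. Moreover, letting the free coordinate on $m$ run through the $n$-th roots of unity (e.g. $[1:\zeta^{-a}:\zeta^{-c}:\zeta^e]$ in the example), I obtain points all of whose coordinates are roots of unity; by Lemma \ref{roots} each has multiplicity $2$ in $\A^H$. Hence a fixed component forces the presence of both a point of multiplicity $n$ and a point of multiplicity $2$.

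Conversely, suppose $\A^H$ has a point $e$ of multiplicity $n$ and a point $P$ of multiplicity $2$. Then $e$ is a coordinate point and $P$ has root-of-unity coordinates, and both lie on the plane $H$, so the line $M=\overline{eP}$ is contained in $H$. Normalizing $e=[0:0:0:1]$ and $P=[\zeta^{\alpha}:\zeta^{\beta}:\zeta^{\gamma}:1]$, along $M$ the ratios $x_0/x_1,\,x_0/x_2,\,x_1/x_2$ equal the constant roots of unity $\zeta^{\alpha-\beta},\zeta^{\alpha-\gamma},\zeta^{\beta-\gamma}$, so $M$ lies in one plane of each block; in particular $M=P_1\cap P_2$ with $P_1,P_2$ from different blocks, and thus $M$ is a fixed component of $\A^H$. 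Because $P$ has no vanishing coordinate, $M$ is not a half-block base, so this is a genuine fixed component and not a heavy line. For the uniqueness clause I would argue by contradiction: two points of multiplicity $n$ would put two coordinate points in $H$, hence the line joining them, which is necessarily one of the six half-block bases $\{x_i=x_j=0\}$; by subsection \ref{heavy-n} its presence forces a line of multiplicity $n$, contradicting the standing hypothesis. Thus at most one point of multiplicity $n$ occurs.

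The hard part will be the bookkeeping of multiplicities in the (non-reduced, pre-cancellation) arrangement $\A^H$ rather than in $\A^H_0$. Specifically, I must verify that the root-of-unity points exhibited on the fixed-component line genuinely have multiplicity exactly $2$ there---that is, that the chosen $H$ satisfies the ``allowable'' hypothesis of Lemma \ref{roots}, with the six planes through each such point coming one from each half-block---and that the coordinate point retains multiplicity $n$ in $\A^H$ itself, the drop to $n-1$ occurring only after $m$ is cancelled to form $\A^H_0$. Checking that no unintended coincidences raise a line multiplicity above $1$ (which would exit the hypothesis) is the one place where genericity of $H$ must be used with care.
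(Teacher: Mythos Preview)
Your argument is correct and rests on the same underlying observation as the paper: the simultaneous presence in $H$ of a coordinate point $e$ and a root-of-unity point $P$ forces $H$ to contain the line $\overline{eP}$, and this line lies in one plane from each block, hence is a fixed component. The paper establishes this by rewriting the equation of $H$ as $A(x_0-\zeta^a x_2)=-B(x_1-\zeta^b x_2)$ and reading off the two planes; you argue synthetically by noting that along $\overline{eP}$ the ratios $x_0/x_1,\,x_0/x_2,\,x_1/x_2$ are constant roots of unity. Your presentation is cleaner and makes the geometry transparent, and the bookkeeping worries you flag in the last paragraph are not genuine obstacles (the six planes through a root-of-unity point on $m$ really do come one from each half-block, so the multiplicity is exactly $2$).

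Your uniqueness argument, however, is genuinely different from the paper's. The paper plugs a second coordinate point into the equation already normalized using the multiplicity-$2$ point and derives $B=0$, reusing the earlier contradiction. You bypass the multiplicity-$2$ hypothesis entirely: any two coordinate points span a line $\{x_i=x_j=0\}$, which is a half-block base and forces a line of multiplicity $n$, contradicting the standing hypothesis. This actually proves the stronger statement that a light $\A^H$ can have at most one point of multiplicity $n$, which seems to clash with the example $H\colon Ax_0+Bx_1=0$ in subsection~\ref{light-n}; but that $H$ visibly contains the base $\{x_0=x_1=0\}$ and is therefore heavy, so your strengthening stands.
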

\begin{proof}
Using the conditions on $H$ we can assume without any loss that (i) $[0:0:0:1]\in H$
and (ii) there is a point $[\zeta^a:\zeta^b:1:\zeta^c]$ in $H$
 where $\zeta$ is a primitive root of unity of degree $n$
while $a,b,c\in C_n$ (see Lemma \ref{roots}).

Thus
$H$ can be given by an equation of the form
$$Ax_0+Bx_1+Dx_2=0$$ 
with $D=-A\zeta^a-B\zeta^b$. Plugging it into the equation we can write the latter as
\begin{equation}
\label{eq:fixed}
A(x_0-\zeta^ax_2)=-B(x_1-\zeta^bx_2).
\end{equation}
In the equation $A,B\not=0$. Indeed if only one of them is 0 then $H$ coincides with a plane of $Q_n$ which is forbidden. If both vanish then $H$ is given by $x_2=0$
which gives a line in $\A^H$ of multiplicity $n$.
This implies that two blocks of $\A^H$ have $x_1-\zeta^ax_3$ as a 
common component whence all three of them do. 

In order to prove the converse suppose $\A^H$ has a fixed component.
This is equivalent to $H$ containing the line of intersection of two planes
from different block.
Thus without any loss we can assume that $H$ is given by  an equation of the type
\eqref{eq:fixed} which implies the statement.

If  $\A^H$ had two points of multiplicity $n$ then without any loss we
could assume that $B=0$ above which is a contradiction as in the first part of the proof.

\end{proof}

\begin{remark}
\label{two-fixed}
The proof of the Lemma \ref{fixed} really gives a fixed components
passing through the given points of multiplicity $n$ and 2. Conversely, every
fixed component has a unique point of multiplicity $n$ and $n$ points of 
multiplicity  2. For instance, if coordinates are chosen as in the proof of the Lemma \ref{fixed}
one obtains $n$ points of multiplicity 2 keeping the coordinates $x_0,x_1,x_2$ 
satisfying \ref{eq:fixed} and taking $x_3=\zeta^d$ $(d\in C_n)$.
\end{remark}

Now we assume that $\A^H$ has a fixed component whence $H$ has both,
a point of multiplicity $n$ and a point of multiplicity 2.
The general form of an equation of such $H$ is (up to permuting coordinates)
$$A(x_0-\zeta^ax_1)+B(x_0-\zeta^bx_2)=0$$
with $AB(A+B)\not=0$, $\zeta$ is a primitive root of unity and $a,b\in C_n$. 
Notice that $P_0=[0:0:0:1]\in H$. After substitution $\frac{A\zeta^ax_1+B\zeta^bx_2}{A+B}$ for $x_0$ and canceling $x_1-\zeta^{b-a}x_2$ we have 
a multinet $\A^H_0$ such that $P_0$ acquires multiplicity $n-1$. 

This point may be the unique point of multiplicity $n-1$. For instance, this the case of $H$ given by $3x_0-2x_1-x_2=0$.

Choosing $H$ more carefully we can have it containing besides the intersection of 
another pair of planes from the same two blocks. For instance, if $H$ is given by $x_0=(\zeta+1)x_1-\zeta x_2$  then  it produces the common factors $x_1-x_2$ and $x_1-\zeta x_2$ which results in a light $(3,2n-2)$-multinet upon cancellation.
The point $P_0$ again becomes the only point of multiplicity $n-2$ of 
$\A^H_0$.

According to Lemma 2, every $H$ such that $\A^H$ has a fixed component 
must have not only a point of multiplicity $n$ but also some points of multiplicity 2
that all lies on fixed components. Thus  $\A^H_0$ does not have any points of multiplicity 2.

\begin{remark}
At the beginning of Lemma \ref{fixed} we assume that $\A^H$ does not any 
multiple points. Using the technique of this subsection it is almost immediate
to prove that a heavy induced multinet cannot have fixed components.
\end{remark}

\subsection{The number of mixed components}
The question that was not addressed so far is how many mixed components 
the induced arrangement $\A^H$ might have. We have seen above that it may be 2. Now we resolve the question.

\begin{thm}
\label{number-mixed}
No induced arrangements $\A^H$ can have more than 2 mixed components.
\end{thm}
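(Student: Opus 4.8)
The plan is to show that, once $\A^H$ has any mixed (i.e.\ fixed) component at all, every mixed component is a line of $H$ through one and the same coordinate vertex, and then to count those lines by a purely metric argument on the unit circle. First I would recall what a mixed component is: it is the restriction to $H$ of a line $L=P\cap P'$, where $P,P'$ are planes of $Q_n$ from \emph{different} blocks and $L\subset H$. Since the complementary coordinate pair of any half-block lies in the \emph{same} block, two half-blocks from different blocks always share exactly one variable and together involve three distinct coordinates; hence along $L$ the remaining coordinate $x_l$ is free, so $L$ passes through the vertex $e_l$. Because $L\subset H$, this forces $e_l\in H$, and $e_l$ is a point of multiplicity $n$ of $\A^H$.

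Next I would invoke Lemma \ref{fixed}. If $\A^H$ has at least one mixed component, then it has a fixed component, hence a point of multiplicity $2$, and therefore at most one point of multiplicity $n$. Consequently all the vertices $e_l$ arising above coincide, so every mixed line meets a single vertex, which I normalize to $e_3=[0:0:0:1]$; then $H$ has an equation $Ax_0+Bx_1+Cx_2=0$ (the coefficient of $x_3$ vanishing since $e_3\in H$). If any of $A,B,C$ were zero, a second coordinate vertex would lie on $H$ and give a second point of multiplicity $n$, contradicting Lemma \ref{fixed}; so $A,B,C$ are all nonzero.

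I would then parametrize the mixed lines through $e_3$. Such a line is spanned by $e_3$ and a point $[1:v:w:0]$ with $v,w$ $n$-th roots of unity (all three nonzero coordinates being needed so that $L$ lies on a binomial plane from each block, exactly as in Lemma \ref{roots}); distinct lines correspond to distinct such points, and $[1:v:w:0]\in H$ precisely when $A+Bv+Cw=0$. Thus the number of mixed components equals the number of unit-modulus pairs $(v,w)$ with $Bv+Cw=-A$.

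The heart of the argument, and the step I expect to be the crux, is bounding this count by $2$. As $w$ ranges over the unit circle, $Cw$ runs over the circle of radius $|C|$ centred at the origin; the relation simultaneously forces $Cw=-A-Bv$ to lie on the circle of radius $|B|$ centred at $-A$. Since $A\neq 0$ these two circles have distinct centres, so they are not equal and meet in at most two points, and each intersection point $z_0$ determines $w=z_0/C$ and then $v=(-A-z_0)/B$ uniquely (both automatically of modulus $1$). Hence there are at most two admissible pairs $(v,w)$, i.e.\ at most two mixed components, which is the assertion; the example $x_0=(\zeta+1)x_1-\zeta x_2$ shows the bound is attained.
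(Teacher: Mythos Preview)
Your proof is correct, and the reduction step---showing that all mixed components must pass through a single coordinate vertex $e_l\in H$, then normalizing to $H:\,Ax_0+Bx_1+Cx_2=0$---follows the same line as the paper, which uses the results of the preceding subsection (equivalently, Lemma~\ref{fixed}) to pin down the form of $H$. One small point: when you invoke Lemma~\ref{fixed} you implicitly need that $\A^H$ has no multiple lines; the paper records separately (in the remark following the fixed-component discussion) that a heavy $\A^H$ cannot have fixed components at all, so the hypothesis is harmless, but you might cite that explicitly.

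The genuine difference is in the counting step. The paper rewrites $H$ as $(x_0-x_1)+A(x_1-x_2)=0$, so that a second mixed component forces $A=(1-\xi)/(1-\eta)$ with $|\xi|=|\eta|=1$, and then proves a standalone proposition (via the identity $1-e^{i\gamma}=2\sin(\gamma/2)e^{i(\gamma-\pi)/2}$ and an argument with cotangents) that this equation has at most one admissible solution; a remark afterwards records a slicker variant due to Buhler--Stong, passing to $z\mapsto 1/(1-z)$ to land on the line $\Re=\tfrac12$ and comparing sums and products. Your route is more direct still: you read $A+Bv+Cw=0$ as the intersection of the circle $|z|=|C|$ with the circle $|z+A|=|B|$, and since $A\neq 0$ the centres differ, so there are at most two intersection points and hence at most two $(v,w)$. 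This avoids both the trigonometry and the inversion trick, at the cost of not isolating a reusable proposition about $(1-\xi)/(1-\eta)$; conversely, the paper's formulation makes the uniqueness statement portable, and the Buhler--Stong version highlights the algebraic symmetry (equal sums and products force equal sets) that your circle picture encodes geometrically.
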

\begin{proof}
Suppose $\A^H$ has a mixed component. By results of the previous subsection $H$ can be given by 
$$(x_0-x_1)+A(x_1-x_2)=0$$
after normalizing and changing the coordinates if needed.
Then any other mixed component would force the lefthand side of this equation to be equal to
$$(x_0-\zeta^ax_1)+A(\zeta^bx_1-x_2)$$
for a primitive root of unity $\zeta$ and some $a,b\in C_n$.
The complex  numbers $\zeta^a,\zeta^b$ and $A$ are related by the following formula:

\begin{equation}
\label{eq:relation}
A=\frac{1-\zeta^a}{1-\zeta^b}
\end{equation}
where $a,b\not=0$.

Thus in order to prove the theorem it suffices to prove  that the relation  (3)
can have at most one solution for fixed $A$ with $a,b\in C_n\setminus\{0\}$. Notice also that $a\not=b$ since if $a=b$ then $A=1$
whence $H$ coincides with a plane from $Q_n$ which is forbidden. 

In fact we prove a stronger statement as the following.

\begin {prop} Let $A\in \C$. The equation \begin{equation}\tag{4}A=\frac{1-\xi}{1-\eta}\end{equation} has at most one solution $(\xi, \eta)\in \C^2$ with the properties: $\xi\ne \eta$,  $|\xi|=|\eta|=1$, and $\xi, \eta \ne 1$. \\
\end{prop}

\begin{proof} Suppose that there are two solutions of $(4)$: $(\xi,\eta)$ and $(\xi_1,\eta_1)$ and their respective complex arguments are $\alpha, \beta, \alpha_1,\beta_1 \in (0,2\pi)$. It is clear that for any number $z=e^{i\gamma}$ we have $$1-z=2\sin\left(\frac{\gamma}{2}\right)e^{i\left(\frac{\gamma-\pi}{2}\right)}.$$
Hence equation (4) implies 
\begin{equation}\tag{5} \alpha-\beta=\alpha_1-\beta_1\end{equation}  \begin{equation}\tag{6} \sin \left(\frac{\alpha}{2} \right)\sin \left(\frac{\beta_1}{2} \right)=\sin \left(\frac{\alpha_1}{2} \right)\sin \left(\frac{\beta}{2} \right) \end{equation}
\begin{equation}\tag{7}\cos \left(\frac{\alpha}{2} \right)\cos \left(\frac{\beta_1}{2} \right)=\cos \left(\frac{\alpha_1}{2} \right)\cos \left(\frac{\beta}{2} \right)\end{equation} where equation (5) is the equality of the arguments, equation (6) comes from the equality of the moduli, and equation (7) follows from (5,6). Also, $(5,6,7)$ imply the following equations for cotangents \begin{equation}\tag{8} \cot\left(\frac{\alpha}{2}\right) \cot\left(\frac{\beta_1}{2}\right)= \cot\left(\frac{\alpha_1}{2}\right) \cot\left(\frac{\beta}{2}\right) \end{equation} \begin{equation}\tag{9} \cot\left(\frac{\alpha}{2}\right)- \cot\left(\frac{\beta}{2}\right)= \cot\left(\frac{\alpha_1}{2}\right)- \cot\left(\frac{\beta_1}{2}\right). \end{equation}  \indent If $\cot\left(\frac{\beta}{2}\right)=0$, i.e., $\frac{\beta}{2}=\frac{\pi}{2}$, then (8) implies either $\cot\left(\frac{\alpha}{2}\right)=0$ whence $\xi=\eta$ or $\cot\left(\frac{\beta_1}{2}\right)=0$ whence $\eta=\eta_1$. The first situation is not possible by the hypotheses of the proposition. The second and (4) show $(\xi_1,\eta_1)=(\xi,\eta)$.\\

 If $\cot\left(\frac{\beta}{2}\right)\ne 0$, resolve equation (8) for $\cot\left(\frac{\alpha_1}{2}\right)$ and plug it into equation (9). Since $\frac{\alpha}{2},\frac{\beta}{2}\in(0,\pi)$ and $\alpha\ne\beta$, we can cancel the factor $\cot\left(\frac{\alpha}{2}\right)-\cot\left(\frac{\beta}{2}\right)$to obtain $\cot\left(\frac{\beta_1}{2}\right)=\cot\left(\frac{\beta}{2}\right)$ whence $\frac{\beta_1}{2}=\frac{\beta}{2}$. The equality $(\xi_1,\eta_1)=(\xi,\eta)$ again follows.  
\end{proof}

The statement of the theorem is a particular case of the proposition.
\end{proof}

\begin{remark}
\label{new-proof}
When the first version of the paper appeared on arXiv we received a comment from Joe Buhler with a more elegant 
proof (of the previous theorem) whose main idea he attributed to Richard Stong. With their permission, we exhibit this proof
below.

When $z$ ranges over the pointed unit circle without $1$ in the complex plane the set of $1-z$ is the pointed unit circle $C$ centered at 1 without 0.  The proposition is about the multiplicative relation of the form

                    $$ ab = cd$$

\noindent where $a,b,c,d\in C$, and (say) $a$ is distinct from both $c$ and $d$.  By taking inverses, this is equivalent to the same relation in the set $D = C^{(-1)}$ of inverses of elements of $C$, which is the set of complex numbers of real part $\frac{1}{2}$.  But if

          $$ (1/2 + ir)(1/2+is) = (1/2+it)(1/2+iu)$$         

\noindent with $r,s,t,u$ real
then equating real and imaginary parts one deduces that the sums and products of sets $\{r,s\}$ and $\{t,u\}$ are equal. Thus these sets coinside.
\end{remark}

 \subsection{Summary of properties of induced multinets from $Q_n$}
 The multinets induced from $Q_n$ possess the following properties 
(we suppose $n>3$).

\medskip

1. The multiplicity of lines takes only values 1, 2, and $n$. There can be 1 or 3 lines of multiplicity $n$. There can be one line of multiplicity 2. Also, there can be three such lines (if $n$ is even) or two (if $n$ is odd).

\bigskip

2. The multiplicity of points takes values from the  list $\{1,2,n-2,n-1,n\}$.
\bigskip

3. A light multinet can have up to two points of multiplicity $n$ or at most one point
of multiplicity $n-1$ or at most one point of multiplicity $n-2$. These three cases are disjoint and each does not allow any other point with multiplicity larger than 1.
\bigskip

4. A light multinet can have several points of multiplicity 2 if it does not have  points of multiplicity $n$. The number of these points is bounded independently of $n$ by
$2^{96}$. 

\section{Combinatorics inside blocks}

In this section we discuss the possibilities for combinatorics of lines and points
inside a block of an induced multinet.

First suppose that $\A^H$ is light, does not have a fixed component and $H$ is given by the equation $Ax_0+Bx_1+Cx_2+Dx_3=0$. Because of the condition on $\A^H $
the plane $H$ does not contain any base line. On the other hand, it has
one point of intersection with every base line. For instance, for the half-block 
$x_0^n-x_1^n$ the point is $[0:0:-D:C]$. Thus in $\A^H$ every
half-block consists of lines intersecting all at one point (i.e., forming a pencil
of dimension 1). For generic $H$ the base point of the pencil is not in $\X$
whence these points for different half-blocks are distinct.  If however
$H$ is passing through the intersection of two base lines then that point is in $\X$. For instance if $H$ is given by $Ax_0+Bx_1+Cx_2=0$ then the point $[0:0:1]$ in it 
is the base point of the pencils in 3 half-blocks from 3 different blocks. 

If $\A^H$ has fixed components then the same claim holds
for $\A^H_0$ except the amount of lines in each of 3 half-blocks involved
in the fixed components decreases either to $n-1$ or $n-2$.

Now suppose that $\A^H$ is heavy. If it has lines of multiplicity $n$ (one or three)
then the respective half-block contains precisely one of these lines instead of a pencil.
If $\A^H$ has lines of multiplicity 2 then one of the lines in the respective block 
connect the  base vertices of two half-blocks (i.e., the intersections of the half-block bases with $H$). Recall that $\A^H$ cannot have lines of both multiplicities: $n$ and 2 (see \ref{heavy-2}).

Summing up the discussion in this section we conclude that the multiplicities of lines 
of an induced multinet determine the combinatorics inside blocks.  

\section{Conjectures and open problems}

For multinets there are more open questions than answers. Here are some of the former.

{\bf Problem 1.} To make the upper bound in Theorem \ref{no n} smaller. (We conjecture that it can be significantly decreased.) \\

{\bf Problem 2.} All induced multinets can be obtained from nets by deformation
(moving the plane $H$). Prove the conjecture from \cite{PY} that all multinets have this property. \\

{\bf Problem 3.}  Are there nets in $\mathbb P^3$ other than $Q_n$? \\

{\bf Problem 4.}  There are nets which are not induced by $Q_n$. Such as, for example, every $(3,2k+1)$-net for $k=1,2,\ldots$ . The light multinet in Figure 2
of \cite{FY} is also not induced from $Q_n$ (a proof should include that it is not induced from
$Q_6$ after a cancellation). 

Are there heavy multinets not induced from $Q_n$? \\

\bigskip


\begin{thebibliography}{9}

\bibitem{B}
J. Bartz, \emph{Multinets in $\mathbb{P}^2$ and $\mathbb{P}^3$}, Ph. D. thesis,
University of Oregon, 2013.


\bibitem{DS}
G. Denham, A. Suciu, \emph{Multinets, parallel connections, and Milnor fibrations of arrangements}, arXiv:1209.3414v2.





%\bibitem{falk2004}
%M. Falk, \emph{The line geometry of resonance varieties},
%:math.CO/0405210.

\bibitem{FY}
M. Falk, S. Yuzvinsky, \emph{Multinets, Ressonance Varieties, and
pencils of plane curves},  Compositio Math. \textbf{143} (2007), 1069-1088.







\bibitem{KNP1}    
G. Korchmaros, G.P. Nagy, N. Pace , \emph{k-nets embedded in a projective plane over a field}, arXiv:1306.5779.


\bibitem{KNP}
G. Korchmaros, G. Nagy, N.Pace, \emph{3-nets realizing a group in a projective plane},  	arXiv:1104.4439v3.


\bibitem {LY}
A. Libgober, S. Yuzvinsky,  \emph{Cohomology of the Orlik-Solomon algebras and local systems},  Compositio Math. \textbf{121} (2000), 337-361. 

\bibitem{OT}
P. Orlik, H. Terao, \emph{Arrangements of Hyperplanes}, Springer-Verlag, 1992.

\bibitem{PY}
J. V. Pereira, S. Yuzvinsky, \emph{Completely reducible hypersurfaces in a pencil},
Advances in Mathematics \textbf{219} (2008), no. 2, 672--688. 


\bibitem{roots} {H. P. Schlickewei, \emph{Equations in roots of unity},  Acta Arithmetica  LXXVI.2 (1996), 99-108.}

\bibitem{St1}
J. Stipins, \emph{Old and new examples of $k$-nets in $\mathbb
P^2$}, math.AG/0701046.

\bibitem{St}
J. Stipins, \emph{On finite $k$-nets in the complex projective plane}, Ph. D. thesis,
The University of Michigan, 2007.




\bibitem{Ynet}
S. Yuzvinsky, \emph{Realization of finite Abelian groups by nets in
$\mathbb{P}^2$}, Compositio Math. \textbf{140} (2004),
1614--1624.

\bibitem {Yu3} S. Yuzvinsky, \emph{Resonance varieties of arrangement complements}, Advanced Studies in Pure Mathematics \textbf{62} (2012), 553-570.

\bibitem{Yu2}
S. Yuzvinsky, \emph{ A new bound on the number of special fibers in a pencil of curves}, Proc. AMS \textbf{137}  (2009), 1641-1648.
\end{thebibliography}
\end{document}